\theoremstyle{theorem}
\newtheorem{thm}{Theorem}
\newtheorem{prop}[thm]{Proposition}
\newtheorem{lemma}[thm]{Lemma}
\newtheorem{cor}[thm]{Corollary}
\theoremstyle{definition}
\theoremstyle{plain}
\newcommand{\R}{\mathbb R} %reele tal
\DeclareMathOperator{\id}{id}
\DeclareMathOperator{\Ham}{Ham}
\title{A distance expanding flow on exact Lagrangian cobordism classes}
\author{Mads R. Bisgaard}
\begin{document}
\maketitle

Consider an exact symplectic manifold $(M^{2n},d\lambda)$ which is tame at infinity. We assume throughout that the Liouville flow $\{\psi_t\}_{t\in \R}$ on $(M,d\lambda)$ is complete. Given a fixed closed exact Lagrangian submanifold $L\subset (M,d\lambda)$ we denote by $\mathcal{L}(L)$ the space of all closed exact Lagrangian submanifolds of $(M,d\lambda)$, which are exact Lagrangian cobordant to $L$ (see definition below). In \cite{CorneaShelukhin15} Cornea and Shelukhin proved the existence of a remarkable "cobordism metric" $d_c$ on $\mathcal{L}(L)$. In this note we prove
\begin{thm}
	\label{corexpand1}
	Let $L\subset (M,d\lambda)$ be a closed exact Lagrangian submanifold. Then $\{\psi_t\}_{t\in \R}$ induces a flow $\{\Psi_t\}_{t\in \R}$ on $(\mathcal{L}(L),d_c)$ via $\Psi_t(L'):=\psi_t(L')$. More precisely, $\Psi$ defines a continuous $\R$-action on $\mathcal{L}(L)$ and every $\Psi_t:\mathcal{L}(L)\to \mathcal{L}(L)$ satisfies
	\begin{equation}
	\label{eq1}
	d_c(\Psi_t(L'),\Psi_t(L''))=e^td_c(L',L'') \quad \forall \ L',L''\in \mathcal{L}(L).
	\end{equation}
	In particular the metric space $(\mathcal{L}(L),d_c)$ has infinite diameter.
\end{thm}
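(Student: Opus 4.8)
The plan is to reduce everything to the single defining property of the Liouville flow, $\psi_t^*\lambda=e^t\lambda$ (equivalently $\psi_t^*(d\lambda)=e^t\,d\lambda$), and to realize $\psi_t$ as the base of a conformally symplectic map of the ambient space $\C\times M$ in which Lagrangian cobordisms and their shadows live. \textbf{Well-definedness and the action:} First I would check that $\psi_t$ preserves exactness, since if $\lambda|_{L'}=df$ then $\psi_t^*(\lambda|_{\psi_t(L')})=e^t\lambda|_{L'}=d(e^tf)$, so $\lambda|_{\psi_t(L')}$ is exact and $\psi_t(L')$ is again a closed exact Lagrangian. Pushing a cobordism from $L'$ to $L$ forward by the product map $\Phi_t$ below shows $\psi_t(L')$ is exact cobordant to $\psi_t(L)$. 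It then remains to see $\psi_t(L)\sim L$: the isotopy $s\mapsto\psi_s(L)$ is generated by the Liouville field $X_\lambda$, and $\iota_{X_\lambda}d\lambda=\lambda$ restricts to the \emph{exact} form $\lambda|_{\psi_s(L)}$ on each $\psi_s(L)$, so this is an exact Lagrangian isotopy, hence extends to an ambient Hamiltonian isotopy whose suspension is an exact Lagrangian cobordism from $L$ to $\psi_t(L)$. By transitivity of the cobordism relation, $\psi_t(L')\sim\psi_t(L)\sim L$, so $\Psi_t(\mathcal{L}(L))\subseteq\mathcal{L}(L)$; the identities $\Psi_0=\id$ and $\Psi_{s+t}=\Psi_s\circ\Psi_t$ are inherited from the flow property of $\psi$.

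\textbf{A conformal map scaling shadows.} The heart of the argument is the map
\begin{equation*}
\Phi_t:\C\times M\to\C\times M,\qquad \Phi_t(z,x)=(e^{t/2}z,\psi_t(x)),
\end{equation*}
where $\C$ carries its standard area form $\omega_\C$. One computes $\Phi_t^*(\omega_\C\oplus d\lambda)=e^t(\omega_\C\oplus d\lambda)$, and, being conformal and linear on the $\C$-factor, $\Phi_t$ scales Euclidean area on $\C$ by $e^t$. Consequently $\Phi_t$ carries (exact) Lagrangian cobordisms to (exact) Lagrangian cobordisms: the Lagrangian condition survives because $\Phi_t$ is conformally symplectic, exactness survives exactly as above, and the cylindrical ends survive because $z\mapsto e^{t/2}z$ sends horizontal half-lines to horizontal half-lines, transforming the ends $(L_1,\dots,L_k)$ of a cobordism $V$ into $(\psi_t(L_1),\dots,\psi_t(L_k))$. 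Since $\Phi_t$ is a diffeomorphism with inverse $\Phi_{-t}$, it induces a bijection between cobordisms with ends $(L',L'')$ and cobordisms with ends $(\psi_t(L'),\psi_t(L''))$, under which the shadow transforms by $\mathcal{S}(\Phi_t(V))=e^t\,\mathcal{S}(V)$.

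\textbf{The scaling identity, continuity, and infinite diameter.} Taking the infimum of shadows over all admissible cobordisms and using the bijection above gives $d_c(\Psi_t(L'),\Psi_t(L''))\le e^t d_c(L',L'')$; applying this to $\Psi_{-t}$ and the pair $(\Psi_t(L'),\Psi_t(L''))$ and using $\Psi_{-t}\circ\Psi_t=\id$ yields the reverse inequality, whence the equality \eqref{eq1}. Each $\Psi_t$ is then bi-Lipschitz, hence a homeomorphism; for joint continuity of $(t,L')\mapsto\Psi_t(L')$ I would combine \eqref{eq1} with the estimate $d_c(\Psi_s(L'),\Psi_t(L'))\le\mathcal{S}(V_{[s,t]})=O(|t-s|)$, where $V_{[s,t]}$ is the trace cobordism of the Liouville isotopy $\{\psi_r(L')\}_{r\in[s,t]}$, whose shadow is controlled by $|t-s|$ and the (bounded) primitive of $\lambda|_{\psi_r(L')}$. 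Finally, since $d_c$ is a genuine metric we may choose $L',L''$ with $d_c(L',L'')>0$; then $d_c(\Psi_t(L'),\Psi_t(L''))=e^t d_c(L',L'')\to\infty$ as $t\to+\infty$, so $(\mathcal{L}(L),d_c)$ has infinite diameter.

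I expect the main obstacle to be bookkeeping rather than conceptual: one must verify that $\Phi_t$ is compatible with the precise class of cobordisms (and any auxiliary end/weight data) entering the definition of $d_c$ in \cite{CorneaShelukhin15}, so that the infima in the previous paragraph really range over corresponding sets and the shadow scales by exactly $e^t$. Two secondary technical points are the quantitative trace estimate needed for continuity in the $t$-variable, and the verification that $\psi_t(L)\sim L$ via the exactness of $\lambda|_{\psi_s(L)}$ and the passage from an exact Lagrangian isotopy to an ambient Hamiltonian one.
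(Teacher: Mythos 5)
Your proposal is correct and follows essentially the same route as the paper: well-definedness via Gromov's observation (the exact Lagrangian isotopy $\{\psi_s(L)\}$ realized by a compactly supported Hamiltonian isotopy and then suspended), the scaling identity \eqref{eq1} via an ambient rescaling of $\R^2\times M$ that multiplies the primitive $\lambda_{\R^2}\oplus\lambda$ by $e^t$ and hence gives a shadow-scaling bijection on exact cobordisms, and continuity via the suspension shadow estimate $(\max_L f-\min_L f)(e^{s_2}-e^{s_1})$ combined with the triangle inequality. The only cosmetic difference is that you rescale the plane factor by $z\mapsto e^{t/2}z$ whereas the paper uses the product Liouville flow $(x,y)\mapsto(e^t x,y)$; both pull $\lambda_{\R^2}$ back to $e^t\lambda_{\R^2}$, scale planar area by $e^t$, and preserve the height-zero cylindrical ends, so the two arguments coincide.
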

Before proving Theorem \ref{corexpand1} we note the following special case.
\begin{cor}
	\label{corexpand2}
	Suppose $L\subset (M,d\lambda)$ in Theorem \ref{corexpand1} satisfies $\lambda|_{L}\equiv 0$. Then $\Psi_t(L)=L$ for all $t\in \R$ and $L$ is the unique fixed point of every $\Psi_{t\neq 0}$. In particular $\Psi$ restricts to a free $\R$-action on $\mathcal{L}(L)\backslash \{L\}$. Hence, if $L'\subset (M,d\lambda)$ is another closed Lagrangian satisfying $\lambda|_{L'}\equiv 0$, then $L'\notin \mathcal{L}(L)$.
\end{cor}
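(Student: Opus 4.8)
The plan is to isolate the single piece of genuine symplectic geometry — the invariance of $L$ under the Liouville flow — and then derive all the remaining assertions formally from the expansion identity \eqref{eq1} together with the non-degeneracy of $d_c$.

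First I would show that $\lambda|_L \equiv 0$ forces $\Psi_t(L) = L$ for every $t$. Let $X$ denote the Liouville vector field, characterized by $\iota_X d\lambda = \lambda$. For any $p \in L$ and $v \in T_pL$ one has $d\lambda(X_p, v) = \lambda(v) = 0$ since $\lambda|_L \equiv 0$, so $X_p$ lies in the $d\lambda$-orthogonal complement of $T_pL$. Because $L$ is Lagrangian this complement is $T_pL$ itself, whence $X_p \in T_pL$. Thus $X$ is everywhere tangent to $L$, its flow $\psi_t$ preserves $L$, and $\Psi_t(L) = \psi_t(L) = L$ for all $t$. This tangency computation is the only step that uses the hypothesis on $\lambda|_L$, and it is the geometric heart of the statement.

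Next, for uniqueness of the fixed point I would substitute $L'' = L$ into \eqref{eq1}. If $\Psi_t(L') = L'$ for some $t \neq 0$, then using $\Psi_t(L) = L$ the expansion identity gives $d_c(L', L) = d_c(\Psi_t(L'), \Psi_t(L)) = e^t d_c(L', L)$; since $e^t \neq 1$ this forces $d_c(L', L) = 0$, and non-degeneracy of $d_c$ yields $L' = L$. Hence $L$ is the unique fixed point of each $\Psi_{t\neq 0}$. Freeness of the restricted action on $\mathcal{L}(L) \setminus \{L\}$ then follows: by the group law $\Psi_t(L') = L$ would give $L' = \Psi_{-t}(L) = L$, so $\Psi$ does preserve $\mathcal{L}(L)\setminus\{L\}$; and a nontrivial stabilizer of some $L' \neq L$ would produce a fixed point of $\Psi_{t\neq 0}$ distinct from $L$, which we have just excluded.

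Finally, for the last clause I would apply the tangency computation to $L'$ in place of $L$: any closed Lagrangian $L'$ with $\lambda|_{L'} \equiv 0$ is invariant under $\psi_t$ and is therefore a fixed point of every $\Psi_t$. If such an $L'$ belonged to $\mathcal{L}(L)$, it would in particular be a fixed point of $\Psi_{t\neq 0}$, and by the uniqueness just established it would equal $L$; so a Lagrangian $L' \neq L$ with $\lambda|_{L'} \equiv 0$ cannot lie in $\mathcal{L}(L)$. I expect the only real subtlety to be the appeal to non-degeneracy of $d_c$ (rather than merely the pseudometric axioms), which is the substantive input imported from \cite{CorneaShelukhin15}; every other step is either the short tangency computation or a formal manipulation of \eqref{eq1}.
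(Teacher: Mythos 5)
Your proposal is correct and takes essentially the same route as the paper: invariance of $L$ under the Liouville flow as the consequence of $\lambda|_L\equiv 0$, followed by the expansion identity \eqref{eq1} combined with non-degeneracy of $d_c$ to pin down $L$ as the unique fixed point of each $\Psi_{t\neq 0}$. You merely spell out the tangency computation and the formal deductions (freeness and the final clause about $L'$) that the paper leaves implicit.
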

\begin{proof}
	The condition $\lambda|_{L}\equiv 0$ is equivalent to the condition $\psi_t(L)=L$ for all $t\in \R$. If $L'\in \mathcal{L}(L)$ is a fixed point for some $\Psi_{t\neq 0}$ then 
	\[
	d_c(L,L')=d_c(\Psi_t(L),\Psi_t(L'))=e^td_c(L,L').
	\]
	Since $t\neq 0$, non-degeneracy of $d_c$ implies $L=L'$.
\end{proof}

\section*{Prerequisites}
Recall that $\psi_t^*\lambda=e^t\lambda$ for all $t\in \R$. Consider $(\R^2(x,y),d\lambda_{\R^2})$, where $\lambda_{\R^2}=xdy$, and denote by $\{\psi^{\R^2}_t\}_{t\in \R}$ the corresponding Liouville flow. Note that the Liouville flow $\{\tilde{\psi}_t\}_{t\in \R}$ on the product $(\tilde{M},d\tilde{\lambda}):=(\R^2 \times M,d(\lambda_{\R^2}\oplus \lambda))$ is equal to the product flow $\{\psi^{\R^2}_t \times \psi_t\}_{t\in \R}$. We will use the notion of Lagrangian cobordism from \cite{BiranCornea13}. In short, given two closed Lagrangian submanifolds $L,L'\subset (M,d\lambda)$, a Lagrangian cobordism $V\subset (\tilde{M},d\tilde{\lambda})$ with negative end $L$ and positive end $L'$ is a Lagrangian submanifold $V\subset (\tilde{M},d\tilde{\lambda})$ which has two cylindrical ends of the type $((-\infty,-R]\times \{0\})\times L$ and $([R,\infty)\times \{0\})\times L'$ for some $R>0$, and satisfies the condition that $([-R,R]\times \R \times M)\cap V$ is compact (we refer to \cite{BiranCornea13} and \cite{CorneaShelukhin15} for further details on cobordisms). We write $V: L'\rightsquigarrow L$ and say that $V$ is exact if it is an exact Lagrangian submanifold of $(\tilde{M},d\tilde{\lambda})$. Given a Lagrangian cobordism $V: L'\rightsquigarrow L$  one considers its \emph{outline} $ou(V)\subset \R^2$, which by definition is the complement of the union of all unbounded connected components of $\R^2 \backslash \pi(V)$. Here $\pi: \R^2 \times M \to \R^2$ denotes the projection. The shadow $\mathcal{S}(V)$ of a Lagrangian cobordism $V: L'\rightsquigarrow L$ is then by definition the area of $ou(V)$. Given a fixed closed exact Lagrangian submanifold $L\subset (M,d\lambda)$ Cornea and Shelukhin \cite{CorneaShelukhin15} showed that 
\[
d_c(L',L''):=\inf\{\mathcal{S}(V)\ |\ V: L''\rightsquigarrow L'\ \text{is an exact Lagrangian cobordism} \}
\]
defines a non-degenerate metric on the space $\mathcal{L}(L)$. Note that "being exact Lagrangian cobordant" is an equivalence relation. To fix conventions we remark that, given $H\in C_c^{\infty}([0,1]\times M)$, we denote by $\{\phi_H^t\}_{t\in[0,1]}$ the Hamiltonian isotopy generated by the vector field $X_{H_t}$ satisfying $i_{X_{H_t}}d\lambda =-dH_t$.

\section*{Proof of Theorem \ref{corexpand1}}
The proof of Theorem \ref{corexpand1} is inspired by a technique originally employed by Biran and Cieliebak in \cite{BiranCieliebak02} for a different purpose. The first step is the following beautiful observation which (to our knowledge) was first noted by Gromov \cite{Gromov85}. We will need the proof later, so we include it here.
\begin{lemma}[\cite{Gromov85}]
	\label{propexpand10}
	If $L\subset (M,d\lambda)$ is a closed exact Lagrangian then $\{\psi_t(L)\}_{t\in \R}\subset \mathcal{L}(L)$.
\end{lemma}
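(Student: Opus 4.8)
The plan is to prove that every $\psi_t(L)$ is exact Lagrangian cobordant to $L$ itself, so that the whole orbit $\{\psi_t(L)\}_{t\in\R}$ lands inside the single cobordism class $\mathcal{L}(L)$. Fix $t\in\R$. I need to produce an exact Lagrangian cobordism in $(\tilde M,d\tilde\lambda)=(\R^2\times M, d(\lambda_{\R^2}\oplus\lambda))$ whose two cylindrical ends recover $L$ and $\psi_t(L)$.

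The key idea is to use the trace of the Liouville flow itself as the cobordism. Since $\{\psi_s\}_{s\in\R}$ is a complete flow on $M$ and $\psi_s^*\lambda=e^s\lambda$, I would consider the suspension-type submanifold $V\subset \R^2\times M$ swept out by the orbit, roughly
\begin{equation}
\label{eq:trace}
V=\{(\gamma(s),\psi_{\tau(s)}(p))\ :\ s\in\R,\ p\in L\},
\end{equation}
where $\gamma:\R\to\R^2$ is a curve in the plane chosen so that $\tau(s)$ interpolates between $0$ (on the negative end) and $t$ (on the positive end), and so that $\gamma$ is horizontal with the correct $y$-coordinate $0$ outside a compact interval. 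First I would check that $V$ is Lagrangian: the tangent space is spanned by $TL$-directions pushed forward by $\psi_{\tau(s)}$ together with the flow direction $\partial_s$, and because each $\psi_{\tau(s)}$ is a conformal symplectomorphism ($\psi_s^*d\lambda=e^s d\lambda$) the $L$-directions stay isotropic, while the flow direction pairs with them through $i_{X}d\lambda=d(\text{something})$ — here $X$ is the Liouville vector field. The plane curve $\gamma$ must be engineered so that the $\R^2$-contribution to $\tilde\lambda$ exactly cancels the conformal stretching coming from $\psi_{\tau(s)}$, which is where the specific primitive $\lambda_{\R^2}=x\,dy$ enters.

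The cylindrical-end conditions are then arranged by requiring $\tau(s)$ to be constant ($\equiv 0$ near $s=-\infty$ and $\equiv t$ near $s=+\infty$) and $\gamma(s)$ to be of the form $(s,0)$, i.e.\ horizontal at height $y=0$, outside a compact $s$-interval; this gives precisely the ends $((-\infty,-R]\times\{0\})\times L$ and $([R,\infty)\times\{0\})\times\psi_t(L)$ and makes $([-R,R]\times\R\times M)\cap V$ compact because $L$ is closed and $\gamma$ traverses a bounded region. Finally I would verify exactness: $\tilde\lambda|_V$ must be exact. On $V$ the pullback of $\lambda$ accumulates a primitive that is controlled by the primitive $f_L$ of $\lambda|_L$ together with the factor $e^{\tau(s)}$ from $\psi_{\tau(s)}^*\lambda=e^{\tau(s)}\lambda$, and the $x\,dy$ term contributes the area integral along $\gamma$; choosing $\gamma$ and $\tau$ compatibly makes the total a genuine exact one-form.

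The main obstacle, I expect, is the exactness bookkeeping rather than the Lagrangian condition: I must track how the primitive of $\lambda|_L$ transforms under the conformal flow $\psi_{\tau(s)}$ and simultaneously control the planar term $x\,dy$ so that the two contributions combine into $d(\text{global function})$ on $V$, not merely a closed form. Concretely, the obstruction to exactness is measured by periods of $\tilde\lambda|_V$, and I would need the interpolation $(\gamma,\tau)$ to be chosen so that these periods vanish; since $L$ is already exact and the flow scales $\lambda$ conformally, the natural guess is that the planar curve can absorb exactly the discrepancy $e^t-1$ in the primitive. Verifying this cancellation carefully — likely by exhibiting an explicit primitive of $\tilde\lambda|_V$ built from $f_L\circ\psi_{\tau(s)}^{-1}$ and the planar area function — is the technical heart of the argument, and it is the step I would write out in full detail.
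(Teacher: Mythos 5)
There is a genuine gap, and it sits exactly where you did not expect it: in the Lagrangian condition, not in the exactness bookkeeping you identified as the technical heart. Your ansatz \eqref{eq:trace} takes the $\R^2$-component of $V$ to be a curve $\gamma(s)$ depending only on $s$, not on the point $p\in L$. Compute the symplectic pairing of the two kinds of tangent vectors to $V$ at $(\gamma(s),\psi_{\tau(s)}(p))$: the $s$-direction $u_s=\bigl(\gamma'(s),\,\tau'(s)X(\psi_{\tau(s)}(p))\bigr)$, where $X$ is the Liouville field, and an $L$-direction $u_v=\bigl(0,\,d\psi_{\tau(s)}(v)\bigr)$ with $v\in T_pL$. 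The planar contribution is $\omega_{\R^2}(\gamma'(s),0)=0$: there is nothing for $\gamma$ to cancel, because the $L$-directions have vanishing $\R^2$-component. The $M$-contribution is
\begin{equation*}
\tau'(s)\,d\lambda\bigl(X,\,d\psi_{\tau(s)}(v)\bigr)=\tau'(s)\,\lambda\bigl(d\psi_{\tau(s)}(v)\bigr)=\tau'(s)\,e^{\tau(s)}\,df(v),
\end{equation*}
using $i_Xd\lambda=\lambda$ and $\psi_\tau^*\lambda=e^\tau\lambda$, where $\lambda|_L=df$. Hence your $V$ is Lagrangian if and only if $\tau'(s)\,e^{\tau(s)}\,df\equiv 0$, i.e.\ either $\tau$ is constant (no interpolation, $t=0$) or $f$ is locally constant, which is the case $\lambda|_L\equiv 0$ where $\psi_t(L)=L$ and there is nothing to prove (cf.\ Corollary \ref{corexpand2}). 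So the construction fails in every nontrivial case, and no choice of plane curve can repair it.

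The fix is to let the second plane coordinate depend on $p$, which is precisely the Lagrangian suspension construction the paper invokes. The paper first checks that $F(t,x)=\psi_t(x)$ satisfies $F^*\lambda|_{(t,x)}=e^t\,df|_x$, so the Liouville flow restricted to $L$ is an exact Lagrangian isotopy; by the standard result (Theorem 3.6.7 in \cite{Oh15}, Exercise 6.1.A in \cite{Polterovich01}) it is generated by an ambient compactly supported Hamiltonian $H$ with $H_t(\psi_t(x))=e^tf(x)$. The suspension of this Hamiltonian isotopy has $y$-coordinate equal to the Hamiltonian evaluated at the moving point, essentially $e^{\tau(s)}f(p)$, and it is exactly this $p$-dependence of the planar component that produces a term cancelling the cross term displayed above. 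In particular the planar projection of the resulting cobordism is a genuinely two-dimensional region whenever $f$ is non-constant, never a curve — this is also what makes the shadow computation in the proof of Theorem \ref{corexpand1} (the factor $\max_L f-\min_L f$) nontrivial. Your end conditions, compactness, and exactness discussion would go through once the ansatz is replaced by the honest suspension, but as written the proposal breaks at the first verification.
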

\begin{proof}
	Let $f\in C^{\infty}(L)$ satisfy $\lambda|_L=df$. Consider the Lagrangian isotopy $F:\R \times L\to M$ given by $F(t,x)=\psi_t(x)$. One checks that $F^*\lambda|_{(t,x)}=e^tdf|_x$, so $F$ is an exact Lagrangian isotopy. By the proof of Theorem 3.6.7 in \cite{Oh15} or Exercise 6.1.A in \cite{Polterovich01} we conclude the existence of $H\in C^{\infty}_c([0,1]\times M)$ such that $\psi_t(L)=\phi_H^t(L)$ for all $t\in [0,1]$ and 
	\begin{equation}
	\label{eq2}
	H_t(\psi_t(x))=e^tf(x) \quad \forall \ (t,x)\in [0,1]\times L.
	\end{equation}
	The claim now follows from the Lagrangian suspension construction \cite{Oh15}, \cite{Polterovich01}.
\end{proof}
\begin{lemma}
	\label{lemexpand1}
	Fix $t\in \R$ and consider a Lagrangian cobordism $V: L'\rightsquigarrow L$. Then $\tilde{\psi}_t(V):\psi_t(L')\rightsquigarrow \psi_t(L)$ is a Lagrangian cobordism satisfying
	\begin{equation}
	\label{expand1}
	\mathcal{S}(\tilde{\psi}_t(V))=e^t\mathcal{S}(V).
	\end{equation}
\end{lemma}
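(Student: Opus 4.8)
The plan is to reduce everything to one explicit formula and then feed it into the three defining features of a cobordism together with the definition of the shadow. First I would pin down the Liouville flow on the $\R^2$-factor. Since $d\lambda_{\R^2}=dx\wedge dy$, the Liouville vector field solving $i_Z d\lambda_{\R^2}=\lambda_{\R^2}=x\,dy$ is $Z=x\,\partial_x$, so $\psi^{\R^2}_t(x,y)=(e^tx,y)$. Combined with the product description $\tilde\psi_t=\psi^{\R^2}_t\times\psi_t$ recalled in the Prerequisites, this gives
\[
\tilde\psi_t(x,y,p)=(e^tx,\,y,\,\psi_t(p)),\qquad (x,y,p)\in\R^2\times M .
\]
Every remaining assertion follows by substituting this into the relevant definition.

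Second, I would check that $\tilde\psi_t(V)$ is a Lagrangian cobordism. Lagrangianity is immediate from $\tilde\psi_t^*(d\tilde\lambda)=e^t\,d\tilde\lambda$: restricting $\tilde\psi_t$ to the diffeomorphism $V\to\tilde\psi_t(V)$ and pulling back gives $\tilde\psi_t^*\bigl(d\tilde\lambda|_{\tilde\psi_t(V)}\bigr)=e^t\,d\tilde\lambda|_V=0$, hence $d\tilde\lambda|_{\tilde\psi_t(V)}=0$, and the half-dimensionality is preserved since $\tilde\psi_t$ is a diffeomorphism. For the cylindrical ends I apply the explicit formula to $((-\infty,-R]\times\{0\})\times L$ and $([R,\infty)\times\{0\})\times L'$; because $\psi^{\R^2}_t$ preserves the line $\{y=0\}$ and rescales the $x$-coordinate by $e^t$, these ends are carried precisely to $((-\infty,-e^tR]\times\{0\})\times\psi_t(L)$ and $([e^tR,\infty)\times\{0\})\times\psi_t(L')$, i.e.\ cylindrical ends over $\psi_t(L)$ and $\psi_t(L')$ with the new constant $R'=e^tR$. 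The compactness condition transfers because $\tilde\psi_t([-R,R]\times\R\times M)=[-e^tR,e^tR]\times\R\times M$ (using completeness of $\psi_t$, so that $\psi_t(M)=M$); hence the relevant slice of $\tilde\psi_t(V)$ is the continuous image of the compact slice of $V$ and is therefore compact. I would also record, since it is needed later, that exactness is preserved: if $\tilde\lambda|_V=dg$ then $\tilde\lambda|_{\tilde\psi_t(V)}=d\bigl(e^t\,g\circ\tilde\psi_t^{-1}\bigr)$.

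Third, and this is the step demanding the most care, comes the shadow. The crucial point is that the projection intertwines the two flows, $\pi\circ\tilde\psi_t=\psi^{\R^2}_t\circ\pi$, which follows at once from the explicit formula; consequently $\pi(\tilde\psi_t(V))=\psi^{\R^2}_t(\pi(V))$. The main obstacle is to show that the outline operation commutes with this rescaling, i.e.\ $ou(\tilde\psi_t(V))=\psi^{\R^2}_t(ou(V))$. Here one must argue that the homeomorphism $\psi^{\R^2}_t$ of $\R^2$, being a linear isomorphism, carries connected components to connected components and bounded (respectively unbounded) sets to bounded (respectively unbounded) sets, so that it restricts to a bijection between the unbounded connected components of $\R^2\setminus\pi(V)$ and those of $\R^2\setminus\pi(\tilde\psi_t(V))$; taking complements then gives the claimed identity of outlines. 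Granting this, the area scales by the Jacobian determinant of $(x,y)\mapsto(e^tx,y)$, namely $e^t$, whence
\[
\mathcal{S}(\tilde\psi_t(V))=\operatorname{area}\bigl(\psi^{\R^2}_t(ou(V))\bigr)=e^t\,\operatorname{area}(ou(V))=e^t\,\mathcal{S}(V),
\]
which is \eqref{expand1}.
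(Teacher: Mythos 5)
Your proof is correct and follows essentially the same route as the paper, which simply observes that the cobordism conditions transfer "immediately from the definitions" and that the shadow scales by the substitution formula, using $(\psi^{\R^2}_t)^*\lambda_{\R^2}=e^t\lambda_{\R^2}$ (equivalently, your Jacobian computation for $(x,y)\mapsto(e^tx,y)$). Your write-up merely makes explicit the details the paper leaves to the reader — in particular the point that $\psi^{\R^2}_t$, being a linear homeomorphism, intertwines the outline operation with the rescaling — and these verifications are all sound.
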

\begin{proof}
The first claim is immediate from the definitions.  Using the fact that $(\psi^{\R^2}_t)^*\lambda_{\R^2}=e^t\lambda_{\R^2}$ (\ref{expand1}) follows from an easy application of the substitution formula in measure and integration theory. 
\end{proof}
Now fix two closed Lagrangian submanifolds $L,L'\subset (M,d\lambda)$. Lemma \ref{lemexpand1} shows that $\tilde{\psi}_t$ sets up a bijection between the space of Lagrangian cobordisms $L'\rightsquigarrow L$ and the space of Lagrangian cobordisms $\psi_t(L')\rightsquigarrow \psi_t(L)$. Moreover, this bijection increases the shadow (if $t>0$) or decreases the shadow (if $t<0$) in the sense of (\ref{expand1}). If in addition $L$ and $L'$ are exact then so are $\psi_t(L)$ and $\psi_t(L')$ and the bijection restricts to a bijection of exact Lagrangian cobordisms. This proves
\begin{prop}
	\label{propexpand1}
	Fix $t\in \R$ and a closed exact Lagrangian submanifold $L\subset (M,d\lambda)$. For $L'\in \mathcal{L}(L)$  we have $\psi_t(L')\in \mathcal{L}(\psi_t(L))$ and 
	\[
	d_c(\psi_t(L),\psi_t(L'))=e^td_c(L,L').
	\]
\end{prop}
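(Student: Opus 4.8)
The plan is to deduce the statement entirely from Lemma \ref{lemexpand1}, which provides a shadow-scaling correspondence between the relevant spaces of cobordisms. First I would establish the membership claim $\psi_t(L')\in \mathcal{L}(\psi_t(L))$. Since $L'\in \mathcal{L}(L)$, by definition there exists an exact Lagrangian cobordism $V:L'\rightsquigarrow L$, and Lemma \ref{lemexpand1} guarantees that $\tilde{\psi}_t(V):\psi_t(L')\rightsquigarrow \psi_t(L)$ is again a Lagrangian cobordism. To upgrade this to an \emph{exact} cobordism I would record that $\tilde{\psi}_t$ is an exact symplectomorphism with $\tilde{\psi}_t^*\tilde{\lambda}=e^t\tilde{\lambda}$: if $\tilde{\lambda}|_V=dg$ for some $g\in C^\infty(V)$, then $\tilde{\lambda}|_{\tilde{\psi}_t(V)}$ admits the primitive $e^t\,(g\circ \tilde{\psi}_{-t})$ and is therefore exact. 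Hence $\tilde{\psi}_t(V)$ is an exact Lagrangian cobordism with the stated ends, which yields $\psi_t(L')\in \mathcal{L}(\psi_t(L))$ and, in particular, shows that the infimum defining $d_c(\psi_t(L),\psi_t(L'))$ is taken over a nonempty set.

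The second step is to promote Lemma \ref{lemexpand1} to the assertion that $V\mapsto \tilde{\psi}_t(V)$ is a \emph{bijection} from the set of exact Lagrangian cobordisms $L'\rightsquigarrow L$ onto the set of exact Lagrangian cobordisms $\psi_t(L')\rightsquigarrow \psi_t(L)$. I would obtain injectivity and surjectivity simultaneously by exhibiting the inverse: since $\{\tilde{\psi}_t\}_{t\in\R}$ is a flow, $\tilde{\psi}_{-t}$ is a two-sided inverse of $\tilde{\psi}_t$, and by the exactness argument above it carries exact cobordisms $\psi_t(L')\rightsquigarrow \psi_t(L)$ back to exact cobordisms $L'\rightsquigarrow L$.

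With the bijection in hand, the distance identity follows by a change of variables in the infimum:
\[
d_c(\psi_t(L),\psi_t(L'))=\inf_{W:\,\psi_t(L')\rightsquigarrow\psi_t(L)}\mathcal{S}(W)=\inf_{V:\,L'\rightsquigarrow L}\mathcal{S}(\tilde{\psi}_t(V))=e^t\inf_{V:\,L'\rightsquigarrow L}\mathcal{S}(V)=e^t\,d_c(L,L'),
\]
where the second equality uses surjectivity of $V\mapsto\tilde{\psi}_t(V)$ onto the exact cobordisms with ends $\psi_t(L'),\psi_t(L)$, the third uses the shadow-scaling relation (\ref{expand1}), and the positive scalar $e^t$ commutes with the infimum.

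The only genuinely delicate content lies one level deeper, inside Lemma \ref{lemexpand1} itself, namely the verification that $\mathcal{S}(\tilde{\psi}_t(V))=e^t\mathcal{S}(V)$ via the substitution formula applied to the outline $ou(V)\subset\R^2$; at the level of Proposition \ref{propexpand1} the main point to watch is simply that the correspondence genuinely restricts to the exact cobordisms on both sides, so that the two infima range over matching sets. Since this is exactly what the primitive-scaling argument secures, I expect no essential obstruction beyond careful bookkeeping.
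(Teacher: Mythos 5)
Your proposal is correct and follows essentially the same route as the paper: the paper likewise deduces Proposition \ref{propexpand1} from Lemma \ref{lemexpand1} by noting that $\tilde{\psi}_t$ gives a shadow-scaling bijection between cobordisms $L'\rightsquigarrow L$ and $\psi_t(L')\rightsquigarrow \psi_t(L)$ which restricts to exact cobordisms, and then taking infima. Your write-up merely makes explicit what the paper leaves implicit (the primitive $e^t(g\circ\tilde{\psi}_{-t})$ certifying exactness, the inverse $\tilde{\psi}_{-t}$, and the change of variables in the infimum), which is fine.
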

\begin{proof}[Proof of Theorem \ref{corexpand1}]
	Let $L'\in \mathcal{L}(L)$. Then, by Proposition \ref{propexpand10} and \ref{propexpand1}, $\psi_t(L')\in \mathcal{L}(\psi_t(L))=\mathcal{L}(L)$. This proves that $\Psi$ is well-defined. Now apply Proposition \ref{propexpand1} again to obtain (\ref{eq1}). To prove that $\Psi$ is continuous as a map $\R \times \mathcal{L}(L)\to \mathcal{L}(L)$ we first show that, for a fixed $L' \in \mathcal{L}(L)$, the map $t\mapsto \Psi_t(L')$ is continuous. Without loss of generality we might assume that $L'=L$ and prove continuity at $t\in [0,1]$. To do this, fix $f$ and $H$ as in the proof of Proposition \ref{propexpand10} as well as $0\leq s_1 <s_2 \leq 1$. Using (\ref{eq2}) we compute that the shadow of the Lagrangian suspension corresponding to $\{\phi_H^t(L)\}_{t\in [s_1,s_2]}$ equals
	\[
	\int_{s_1}^{s_2}\max_{\psi_t(L)}(H_t)-\min_{\psi_t(L)}(H_t) dt =(\max_{L}(f)-\min_{L}(f))\int_{s_1}^{s_2}e^t dt=(\max_{L}(f)-\min_{L}(f))(e^{s_2}-e^{s_1}).
	\]  
	In particular continuity of $t\mapsto \Psi_t(L)$ follows:
	\[
	d_c(\Psi_{s_2}(L),\Psi_{s_1}(L))\leq (\max_{L}(f)-\min_{L}(f))(e^{s_2}-e^{s_1})\to 0 \quad \text{for} \ |s_2-s_1|\to 0.
	\]
	To finish the proof, fix $(s,L'')\in \R \times \mathcal{L}(L)$. Then
	\begin{align*}
		d_c(\Psi_t(L'),\Psi_s(L''))&\leq d_c(\Psi_t(L'),\Psi_t(L'')) +d_c(\Psi_t(L''),\Psi_s(L''))\\
		&=e^td_c(L',L'')+d_c(\Psi_t(L''),\Psi_s(L'')) \to 0 \quad \text{for}\ (t,L')\to (s,L'').
	\end{align*} 
\end{proof}
\section*{Relation to Hofer geometry}
Using ideas from the proof of Lemma 3.2 in \cite{BiranCieliebak02} one can also obtain versions of Theorem \ref{corexpand1} for both Lagrangian and Hamiltonian Hofer geometry. This might already be known to some extend, but we have not been able to find the explicit statement in the literature. So, since it shows a close connection between Hofer geometry and $d_c$, we sketch the argument. For details and specifics on the Hofer metric $d_h$ on $\Ham(M,d\lambda)$ we refer to \cite{Polterovich01}, whose conventions we follow here. 

Fix $T\in \R$. A small adaption of the proof of Lemma 3.2 in \cite{BiranCieliebak02} shows that, for a given Hamiltonian $H\in C_c^{\infty}([0,1]\times M)$, the Hamiltonian $e^T\psi_{-T}^*H_t(x):=e^TH_t(\psi_{-T}(x))$ generates the flow $\{\psi_T \circ \phi_H^t \circ \psi_{-T}\}_{t\in [0,1]}$. Thus, given a fixed $\phi \in \Ham(M,d\lambda)$ we have a bijection 
\begin{align*}
\{H\in C_c^{\infty}([0,1]\times M)\ |\ \phi_H^1=\phi \} &\stackrel{\approx}{\longrightarrow} \{G\in C_c^{\infty}([0,1]\times M)\ |\ \phi_G^1=\psi_T \circ \phi \circ \psi_{-T}\}\\
H&\longmapsto e^T\psi_{-T}^*H,
\end{align*}
which evidently in- or decreases oscillation by a factor of $e^T$. In particular  
\begin{equation}
\label{eq100}
d_h(\psi_T\circ \phi \circ \psi_{-T}, \id)=e^Td_h(\phi ,\id ).
\end{equation}
Defining $\Psi:\R \times \Ham(M,d\lambda)\to \Ham(M,d\lambda)$ by $(t,\phi)\mapsto \Psi_t(\phi):=\psi_t \circ \phi \circ \psi_{-t}$, equation (\ref{eq100}) reads
\[
d_h(\Psi_t(\phi),\id )=e^td_h(\phi, \id) \quad \forall \ (t,\phi)\in \R \times \Ham(M,d\lambda).
\]
Using the fact that each $\Psi_t:\Ham(M,d\lambda)\to \Ham(M,d\lambda)$ is a group isomorphism and the fact that $d_h$ is biinvariant the above equation implies 
\begin{equation}
	\label{eq101}
	d_h(\Psi_t(\phi_1),\Psi_t(\phi_2) )=e^td_h(\phi_1, \phi_2 ) \quad \forall \ t\in \R, \ \phi_1,\phi_2 \in \Ham(M,d\lambda).
\end{equation}
To check that $\Psi$ is continuous as a map $\R \times \Ham(M,d\lambda) \to \Ham(M,d\lambda)$ one can apply the same argument as in the proof of Theorem \ref{corexpand1} once it is established that, for a fixed $\phi \in \Ham(M,d\lambda)$, the map $\R \ni t\mapsto \Psi_t(\phi) \in \Ham(M,d\lambda)$ is continuous. But this is easily seen to be the case. More precisely, an easy exercise shows the existence of a Hamiltonian $H\in C^{\infty}([0,1]\times M)$ such that $\phi_H^t \circ \phi =\psi_t \circ \phi \circ \psi_{-t}$ for every $t\in [0,1]$. The existence of such $H$ can also be seen as a special case of a famous result due to Banyaga \cite[Proposition II.3.3]{Banyaga78}. We sum up these observations in the following lemma.\footnote{One does not need $(M,d\lambda)$ to be tame at infinity in order for the Hofer metric to be non-degenerate on $\Ham(M,d\lambda)$. Therefore Lemma \ref{LemmaHam} also holds without this assumption on $(M,d\lambda)$.}
\begin{lemma}
	\label{LemmaHam}
	$\{\psi_t\}_{t\in \R}$ induces a flow $\{\Psi_t\}_{t\in \R}$ on $(\Ham(M,d\lambda),d_h)$ via $\Psi_t(\phi):=\psi_t \circ \phi \circ \psi_{-t}$. More precisely $\Psi$ defines a continuous $\R$-action on $\Ham(M,d\lambda)$ and, for each $t\in \R$, $\Psi_t:\Ham(M,d\lambda)\to \Ham(M,d\lambda)$ is a group isomorphism satisfying
	\[
	d_h(\Psi_t(\phi_1),\Psi_t(\phi_2))=e^td_h(\phi_1,\phi_2) \quad \forall \ \phi_1,\phi_2 \in \Ham(M,d\lambda).
	\] 
	In particular $\id$ is the unique element of $\Ham(M,d\lambda)$ which commutes with $\psi_{t\neq 0}$ and the metric space $(\Ham(M,d\lambda),d_h)$ has infinite diameter. 
\end{lemma}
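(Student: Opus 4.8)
The plan is to isolate the one genuinely analytic input—how the Liouville flow transforms generating Hamiltonians—and then to derive every other assertion formally from the group structure of $\Ham(M,d\lambda)$ together with the non-degeneracy and bi-invariance of $d_h$. First I would establish the conjugation rule: if $\{\phi_H^t\}$ is generated by $X_{H_t}$ with $i_{X_{H_t}}d\lambda=-dH_t$, then the conjugated isotopy $\psi_T\circ\phi_H^t\circ\psi_{-T}$ is the flow of the pushforward vector field $(\psi_T)_*X_{H_t}$, and using $\psi_T^*d\lambda=e^Td\lambda$ one computes
\[
i_{(\psi_T)_*X_{H_t}}d\lambda=(\psi_{-T})^*\bigl(e^T\,i_{X_{H_t}}d\lambda\bigr)=-d\bigl(e^T\,\psi_{-T}^*H_t\bigr),
\]
so this isotopy is generated precisely by $e^T\psi_{-T}^*H$. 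This is the adaptation of Lemma 3.2 of \cite{BiranCieliebak02} invoked above, and it is the only place where the conformal character of $\{\psi_t\}$ enters in an essential way.

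Next I would note that $H\mapsto e^T\psi_{-T}^*H$ is a bijection between Hamiltonians generating $\phi$ and those generating $\psi_T\circ\phi\circ\psi_{-T}$, and that since $\psi_{-T}$ is a diffeomorphism it multiplies the oscillation $\max H_t-\min H_t$, and hence the Hofer length, by $e^T$. Passing to the infimum over generating Hamiltonians yields $d_h(\Psi_T(\phi),\id)=e^Td_h(\phi,\id)$. To upgrade this to the two-point identity I would use that $\Psi_t(\phi)=\psi_t\circ\phi\circ\psi_{-t}$ is a group isomorphism, so that $\Psi_t(\phi_1)\,\Psi_t(\phi_2)^{-1}=\Psi_t(\phi_1\phi_2^{-1})$, together with bi-invariance of $d_h$, which reduces the distance between two elements to the distance of their quotient from $\id$; combining these gives the displayed scaling law. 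The $\R$-action axioms $\Psi_0=\id$ and $\Psi_{s+t}=\Psi_s\circ\Psi_t$ are immediate from $\psi_0=\id$ and $\psi_{s+t}=\psi_s\circ\psi_t$.

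For continuity I would follow exactly the pattern of the proof of Theorem \ref{corexpand1}: once $t\mapsto\Psi_t(\phi)$ is continuous for each fixed $\phi$, the scaling law and the triangle inequality upgrade this to joint continuity on $\R\times\Ham(M,d\lambda)$. Continuity in $t$ follows by producing a Hamiltonian $H\in C^\infty([0,1]\times M)$ with $\phi_H^t\circ\phi=\Psi_t(\phi)$, as guaranteed by \cite[Proposition II.3.3]{Banyaga78}, and bounding $d_h(\Psi_{s_2}(\phi),\Psi_{s_1}(\phi))$ by the oscillation of $H$ over $[s_1,s_2]$. Finally, if $\phi$ commutes with some $\psi_{t\neq0}$ then $\Psi_t(\phi)=\phi$, whence the scaling law forces $d_h(\phi,\id)=e^td_h(\phi,\id)$ and therefore $d_h(\phi,\id)=0$; non-degeneracy then gives $\phi=\id$. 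Infinite diameter follows by fixing any $\phi\neq\id$ and letting $t\to+\infty$ in $d_h(\Psi_t(\phi),\id)=e^td_h(\phi,\id)$. I expect the conjugation computation of the first paragraph to be the only real obstacle—verifying that pushforward under $\psi_T$ rescales the generating Hamiltonian by exactly $e^T$—since everything downstream is a formal consequence of bi-invariance and non-degeneracy.
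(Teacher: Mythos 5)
Your proposal is correct and follows essentially the same route as the paper: the conjugation rule that $e^T\psi_{-T}^*H$ generates $\{\psi_T\circ\phi_H^t\circ\psi_{-T}\}$ (which the paper imports as an adaptation of Lemma 3.2 of \cite{BiranCieliebak02}), the resulting bijection of generating Hamiltonians scaling oscillation by $e^T$ to get $d_h(\Psi_T(\phi),\id)=e^Td_h(\phi,\id)$, bi-invariance plus the group-isomorphism property to upgrade to the two-point identity, and Banyaga's result \cite{Banyaga78} together with the continuity pattern from Theorem \ref{corexpand1} for joint continuity. The only difference is that you carry out the pushforward computation $i_{(\psi_T)_*X_{H_t}}d\lambda=-d\bigl(e^T\psi_{-T}^*H_t\bigr)$ explicitly rather than citing it, and that computation is correct under the paper's sign convention.
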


The Lagrangian Hofer geometric version of Theorem \ref{corexpand1} follows easily from arguments similar to the ones above. To state the corresponding result, recall that if $L\subset (M,d\lambda)$ is a closed Lagrangian submanifold then a result due to Chekanov \cite{Chekanov00} shows that the Lagrangian Hofer metric $d_l$ is non-degenerate on $\mathcal{H}(L):=\{\phi^1_H(L)\ |\ H\in C^{\infty}_c([0,1]\times M)\}$.
\begin{lemma}
	Let $L\subset (M,d\lambda)$ be a closed exact Lagrangian submanifold. Then $\{\psi_t\}_{t\in \R}$ induces a flow $\{\Psi_t\}_{t\in \R}$ on $(\mathcal{H}(L),d_l)$ via $\Psi_t(L'):=\psi_t(L')$. More precisely, $\Psi$ defines a continuous $\R$-action on $\mathcal{H}(L)$ and every $\Psi_t:\mathcal{H}(L)\to \mathcal{H}(L)$ satisfies
	\begin{equation*}
	d_l(\Psi_t(L'),\Psi_t(L''))=e^td_l(L',L'') \quad \forall \ L',L''\in \mathcal{H}(L).
	\end{equation*}
	In particular the metric space $(\mathcal{H}(L),d_l)$ has infinite diameter.
\end{lemma}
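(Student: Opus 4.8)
The plan is to transplant the three ingredients from Theorem~\ref{corexpand1} and Lemma~\ref{LemmaHam} into the Lagrangian Hofer setting, the organizing principle being the conjugation formula already recorded above: for fixed $t$ the compactly supported Hamiltonian $e^t\psi_{-t}^*H$ generates the flow $\{\psi_t\circ\phi_H^s\circ\psi_{-t}\}_{s\in[0,1]}$, and the assignment $H\mapsto e^t\psi_{-t}^*H$ is a bijection of $C_c^{\infty}([0,1]\times M)$ onto itself which scales the Hofer oscillation $\int_0^1(\max_M H_s-\min_M H_s)\,ds$ by $e^t$ (precomposition with the diffeomorphism $\psi_{-t}$ leaves maxima and minima over $M$ unchanged, and the factor $e^t$ does the rest).

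First I would establish well-definedness, namely that $\Psi_t(\mathcal{H}(L))=\mathcal{H}(L)$. The conjugation bijection shows that $L''=\phi_H^1(L')$ implies $\psi_t(L'')=\phi^1_{e^t\psi_{-t}^*H}(\psi_t(L'))$, whence $\psi_t(\mathcal{H}(L'))=\mathcal{H}(\psi_t(L'))$ for every $L'$. Taking $L'=L$, it remains to check $\psi_t(L)\in\mathcal{H}(L)$, which then forces $\mathcal{H}(\psi_t(L))=\mathcal{H}(L)$. For $t\in[0,1]$ this is exactly the content of the Hamiltonian $H$ produced in the proof of Lemma~\ref{propexpand10}, for which $\psi_1(L)=\phi_H^1(L)$; for general $t$ one bootstraps using the group property $\psi_{t+s}=\psi_t\circ\psi_s$ together with the conjugation bijection. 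This is the step I expect to require the most care: since $\psi_t$ is a conformally symplectic, non-compactly-supported diffeomorphism, it is \emph{not} itself an element of $\Ham(M,d\lambda)$, so the membership $\psi_t(L)\in\mathcal{H}(L)$ is a genuine input rather than a formality and is precisely what makes $\Psi_t$ map the Hamiltonian orbit of $L$ back to itself.

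Next I would prove the scaling identity, the Lagrangian analogue of Proposition~\ref{propexpand1} and of (\ref{eq101}). By the previous step $\Psi_t(L')$ and $\Psi_t(L'')$ lie in the same Hamiltonian orbit, so $d_l$ between them is defined; the conjugation bijection carries $\{H\mid \phi_H^1(L')=L''\}$ onto $\{G\mid \phi_G^1(\psi_t(L'))=\psi_t(L'')\}$ and multiplies oscillation by $e^t$, so passing to infima gives $d_l(\Psi_t(L'),\Psi_t(L''))=e^t d_l(L',L'')$ at once.

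Finally, for continuity I would copy the argument in the proof of Theorem~\ref{corexpand1} essentially verbatim. Fixing $f$ and $H$ as in Lemma~\ref{propexpand10} and using (\ref{eq2}), the Lagrangian Hofer length of the path $\{\psi_t(L)\}_{t\in[s_1,s_2]}$, namely the integral of the oscillation of $H_t$ over the moving Lagrangian $\psi_t(L)$, is
\[
\int_{s_1}^{s_2}\Bigl(\max_{\psi_t(L)}H_t-\min_{\psi_t(L)}H_t\Bigr)\,dt=(\max_{L}f-\min_{L}f)(e^{s_2}-e^{s_1}),
\]
which bounds $d_l(\Psi_{s_1}(L),\Psi_{s_2}(L))$ and hence gives continuity of $t\mapsto\Psi_t(L)$ at every $t\in[0,1]$; the scaling identity then propagates continuity to all $t\in\R$, and joint continuity on $\R\times\mathcal{H}(L)$ follows from the triangle inequality combined with scaling exactly as at the end of the proof of Theorem~\ref{corexpand1}. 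Infinite diameter is immediate: choosing any $L'\in\mathcal{H}(L)$ with $L'\neq L$, non-degeneracy of $d_l$ gives $d_l(L,L')>0$, and letting $t\to+\infty$ in the scaling identity makes $d_l(\Psi_t(L),\Psi_t(L'))=e^t d_l(L,L')$ unbounded.
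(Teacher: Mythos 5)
Your proposal is correct and follows exactly the route the paper intends: the paper gives no separate proof of this lemma, stating only that it ``follows easily from arguments similar to the ones above,'' and your three steps (well-definedness via the proof of Lemma~\ref{propexpand10} plus the conjugation formula, scaling via the oscillation-rescaling bijection $H\mapsto e^t\psi_{-t}^*H$ applied to Hamiltonians connecting the two Lagrangians, and continuity via the suspension-length estimate from the proof of Theorem~\ref{corexpand1}) are precisely those arguments. The only cosmetic remark is that your length bound using oscillation over the moving Lagrangian $\psi_t(L)$ invokes the standard cutoff fact relating it to the global Hofer oscillation; for continuity one could alternatively use the cruder bound $\int_{s_1}^{s_2}\bigl(\max_M H_t-\min_M H_t\bigr)\,dt$, which also tends to $0$ as $|s_2-s_1|\to 0$.
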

\subsection*{Acknowledgement} I am very grateful to my advisor Paul Biran for suggesting I consider how one can apply the techniques from the proof of Lemma 3.2 in \cite{BiranCieliebak02} to a Lagrangian cobordism, and for pointing out Gromov's observation to me. This advice triggered the idea presented above. I am also very grateful to Egor Shelukhin for his interest in this work and for several illuminating discussions.
%%%%%%%%%%%%%%%%%%%%%%%%%%%%%%%%%%%%%%%%%%%%%%%%%%%%%%%%%%%%%%%%%%%%%%%%%%%%%%%%%%%%%%%%%%%%%%%%
%\newpage
\bibliographystyle{plain}
%\bibliography{/home/mbisgaard/Dropbox/ETH/CobordismProject/BIB}
\bibliography{BIB}
\end{document}